\newtheorem{corollary}{Corollary}
\newtheorem{proposition}{Proposition}
\newtheorem{definition}{Definition}
\renewcommand{\geq}{\geqslant} 
\renewcommand{\leq}{\leqslant} 
\renewcommand{\ge}{\geqslant} 
\renewcommand{\le}{\leqslant}
\newcommand{\glebdone}[1]{}
\newcommand{\wudone}[1]{}
\newcommand{\simondone}[1]{}
\begin{document}

\begin{frontmatter}



\title{Generating fuzzy measures from additive measures}


\author[inst1]{Jian-Zhang Wu \corref{cor1}}
\ead{jianzhang.wu@deakin.edu.au}
\cortext[cor1]{Corresponding author}
\author[inst1]{Gleb Beliakov}

\affiliation[inst1]{organization={School of Information Technology, Deakin University},
            addressline={Burwood}, 
            postcode={3125}, 
            country={Australia}}



\begin{abstract}
Fuzzy measures, also referred to as nonadditive measures, emerge from the foundational concept of additive measures by transforming additivity into monotonicity. In comparison to the expansive $2^n$ coefficients of fuzzy measures, additive measures encompass just $n$ coefficients, enabling them to efficiently provide initial viable solutions across various domains including normal, super/submodular, super/subadditive, (anti)buoyant, and other specialized fuzzy measures. To further enhance the effectiveness of these measures, techniques such as allowable range adjustments and random walks have been introduced, aiming to decrease the redundancy in linear extensions and bolster the random or uniform nature of the generated measures. In addition to innovating the ideas of random generation and adjustment strategies for multiple types of fuzzy measures, this paper also sheds light on the profound connection between additive and fuzzy measures.

\end{abstract}



\begin{keyword}
Fuzzy measure \sep  Random generation  \sep additive measure \sep Supermodular measure \sep $k$-order fuzzy measure
\end{keyword}

\end{frontmatter}


\section{Introduction}

Fuzzy measures \cite{sugeno1974theory}, commonly known as nonadditive measures \cite{denneberg1994non}, emerge as a natural extension of the fundamental concept of additive measures by replacing the rigid notion of strict additivity with the more flexible principle of monotonicity with respect to set inclusion \cite{grabisch2016:setfunctionbook}. This transition not only imparts fuzzy measures with a distinct character but also empowers them to elegantly encapsulate the complexity of diverse interaction scenarios and flexibly capture the richness and subtleties of dependent multifaceted information \cite{Beliakov2007_book,beliakov2019discrete_book}.


Specialized structured fuzzy measures play a significant role in various fields such as decision-making \cite{grabisch1995fuzzy} and economics \cite{beliakov2022choquet}, as well as in the realm of optimization computing \cite{beliakov-wu-towards}. These measures are designed with specific patterns or properties that offer precise interpretations and insights in practical scenarios, or contribute to a reduction in computational complexity \cite{grabisch2008review}. 
For instance, super/submodular measures and super/subadditive measures signify that all elements or subsets have positive interactions with each other \cite{wuBeliakovNonadd,wuBeliakovNonmodu}. Antibuoyant measures establish a connection between the behavior of the Choquet integral and the Pigou-Dalton progressive transfers principle for societal equality \cite{beliakov2021choquet}.
$p$-symmetric measures are specifically designed to address complex indifference cases among referrers and criteria in decision-making and evaluations \cite{miranda2002p}.
$k$-intolerance and $k$-tolerant measures showcase the extreme cases of either fully supporting or vetoing any $k$ decision criteria or voters \cite{marichal2004tolerant}. On the other hand, $k$-maxitive and $k$-minitive measures correspond to the notions of possibility and necessity for $k$ and higher-order subsets \cite{beliakov2020kmaxtive}. 
$k$-interactive measures establish a straightforward structure for $k$ and higher order measure values exhibiting their partial maximum entropy \cite{beliakov2011learning}.
Furthermore, $k$-additive measures \cite{grabisch1997k}, $k$-nonadditive measures \cite{wuBeliakovNonadd}, and $k$-nonmodular measures \cite{wuBeliakovNonmodu} are designed to simplify calculations by disregarding interactions involving $k$ and higher-order elements or criteria, where the simplification is based on the M\"obius representation, nonadditivity index, and nonmodularity index, respectively.


Additive measures, being a distinctive type of fuzzy measure, possess a limited set of $n$ coefficients that align with the cardinality of the universal set containing $n$ elements. This attribute offers distinct computational benefits when generating individuals. Fortunately, additive measures also fall within, or can be easily manipulated to fit into, the category of specialized structured fuzzy measures mentioned earlier. For example, additive measures can be seen as super/submodular, super/subadditive measures and general cases of $k$-additive, $k$-nonadditive and $k$-nonmodular measures; uniform (additive and symmetric) measures are also (anti)buoyant measures; additive measure are easily changed into $k$-intolerance and $k$-tolerant measures, $k$-maxitive and $k$-minitive measure and $k$-interactive measures.
Consequently, additive measures can be employed as initial individuals in the random generation algorithms for these specific types of measures.


Given that additive measures encompass only a limited domain within the realm of fuzzy measures and their specific variations, they exhibit a deficiency in terms of diversity and randomness. This is particularly evident in their high repetition rate of linear extensions, particularly for smaller universal sets.

The subsequent three main techniques have been employed to enhance the diversity and randomness of normal fuzzy measures and all the associated specialized variations.
The first technique, called the allowable range, seeks to identify a suitable range or interval for adjusting the measure value of a selected subset while adhering to conditions such as monotonicity, nonmodularity, nonadditivity, or specific requirements inherent to specialized structured fuzzy measures. This process involves carefully considering neighboring subsets, supersets of subset $A$, as well as subsets linked to $A$ through M"obius representations, nonadditivity indices, or nonmodularity indices.
The second technique is the random walk approach, which focuses on making controlled adjustments to the position of a selected subset within the linear extension. This adjustment can involve swapping the position of the chosen subset either upwards or downwards with its adjacent subsets, all while ensuring that the given conditions or requirements are maintained. It's important to note that the random walk technique is particularly effective when combined with the allowable range technique.
The third technique involves direct adjustment strategies that specifically target particular scenarios, such as transforming an additive measure into a strictly supermodular or superadditive measure, and eliminating oscillations that arise from lower order subsets and affect higher order subsets in $k$-order fuzzy measures.
It's important to note that these techniques also contribute to our understanding of the structural and conceptual distinctions between fuzzy and additive measures.

This paper is structured as follows: following the introduction, Section 2 introduces the concepts of additive measures, fuzzy measures, and specialized measures. Section 3 discusses the adjustment technique of additive measures to normal fuzzy measures. In Section 4, we investigate the techniques for adjusting additive measures to supermodular and antibuoyant measures. Section 5 is devoted to generating superadditive measures from additive measures. In Section 6, we present methods for generating $p$-symmetric measures, $k$-tolerant measures, $k$-maxitive measures, $k$-additive measures, $k$-nonadditive measures, and $k$-nonmodular measures. Finally, Section 7 provides the concluding remarks.
	
\section{Preliminaries}

Let $N = \{ 1,2, \ldots ,n\}$, $n \ge 2$, be the discrete set of argument items or criteria, $\mathcal{P}(N)$ the power set of $N$, and $|S|$ the cardinality of a subset $S\subseteq N$. 
 \begin{definition} \cite{beliakov2019discrete_book} \label{def.add.measure}
 An additive measure on $N$ is a set function $\mu :\mathcal {P}(N) \to [0,1]$
		such that
		\begin{enumerate}
			\item [(i)]{$\mu (\emptyset ) = 0, \mu (N) = 1;$ (boundary condition)}
			\item [(ii)]{ $\mu(A \cup B)= \mu(A) + \mu(B)$, $\forall A, B \subseteq N$, $A \cap B = \emptyset$. (additivity condition)}
		\end{enumerate}
\end{definition}

 Fuzzy measures (also called nonadditive measures \cite{denneberg1994non}, capacities \cite{choquet1954}), extend the additivity condition of additive measures to the monotonicity condition, thereby achieving a more flexible representation ability \cite{grabisch1995fuzzy}.
 
\begin{definition} \cite{sugeno1974theory} \label{def. fuzzy measure}
A fuzzy measure on $N$ is a set function $\mu :\mathcal {P}(N) \to [0,1]$ such that
\begin{enumerate}
\item [(i)]{$\mu (\emptyset ) = 0, \mu (N) = 1;$ (boundary condition)}
\item [(ii)]{$\forall A,B \subseteq N$, $A \subseteq B$ implies $\mu (A) \le \mu (B)$. (monotonicity condition)}
\end{enumerate}
\end{definition}


The ordering of subsets in correspondence with fuzzy measure values serves as a valuable tool for depicting the uniformity or randomness of a group of fuzzy measures within the feasible domain.

\begin{definition} \cite{combarro201950}
    A linear extension of all subsets in $N$ is a linear or total order $\precsim$ on $\mathcal {P}(N)$ such that $A \subseteq B$ implies  $A \precsim B$, where $\precsim$ is reflexive, antisymmetric, and transitive, and any elements in $\mathcal {P}(N)$ are comparable with order  $\precsim$ .
\end{definition}

Nonadditivity and nonmodularity are two distinct interaction characteristics of fuzzy measures.

\begin{definition} \cite{wuBeliakovNonadd,beliakov2019discrete_book} \label{def.superadd}
A fuzzy measure $\mu$ on $N$ is called superadditive (subadditive) if 
     $\mu(A \cup B) \geq (\leq) \mu(A) + \mu(B)$, $\forall A, B \subseteq N$, $A \cap B = \emptyset$
\end{definition}

\begin{definition} \label{def. non modular fuzzy measure }
\cite{wuBeliakovNonmodu,grabisch2016:setfunctionbook} A fuzzy measure $\mu$ on $N$ is called supermodular 
 (submodular) if $\mu(A \cup B) + \mu(A \cap B) \geq (\leq) \mu(A) + \mu(B)$, $\forall A, B \subseteq N$, $A \cap B \neq A, B$. 
\end{definition}



The $p$-symmetric measure represents subsets from the viewpoint of indifference elements and maps them to $p$-dimensional vectors.

\begin{definition} \cite{miranda2002p}
Let $\mu$ be a fuzzy measure on $N$, a subset $A \subseteq N$ is a subset of indifference with respect to $\mu$ if $\forall B_1,B_2 \subseteq A$, $|B_1| = |B_2|$, then $\mu(C\cup B_1) = \mu (C \cup B_2),\forall C \subseteq N\setminus A$.
\end{definition}

\begin{definition} \cite{miranda2002p} \label{def.p.symmetric}
A capacity $\mu $ on $N$ is said to be  $p$-symmetric if the coarsest partition of $N$ into subsets of indifference contains exactly $p$ subsets $A_1,..., A_p$, where $A_i$ is a subset of indifference, $A_i \cap A_j =\emptyset$,
$\cup_{i=1}^p A_i=N$, $i, j = 1,..., p$, and a partition $\pi$ is coarser than another partition $\pi'$
 if all subsets of $\pi$  are union of
some subsets of $\pi'$. The partition $\{A_1,...,A_p\}$ is called the basis of $\mu$.
\end{definition}


In order to reduce variables as well as their monotonicity conditions involved in constructing fuzzy measure, the notion of $k$-order representative fuzzy measure is proposed \cite{wuBeliakov-k-representative}.  Some simple instances are  $k$-tolerant measure\cite{marichal2007k}, $k$-interactive measure \cite{beliakovWuLearning} and $k$-maxitive measure \cite{mesiar1999generalizations,mesiar2018k,wuBeliakovkminitive}.

\begin{definition} \cite{marichal2007k} \label{def. tole and intole}
	Let $k \in \{1,..., n\}$, a fuzzy measure $\mu$ on $N$ is said to be $k$-tolerant if $\mu(A) = 1$ for all $A \subseteq N$ such that $|A| \geq k$ and there exists a subset
	$ B \subseteq N$, with $|B| = k-1$, such that $\mu(B) \neq 1$. 
\end{definition}



\begin{definition}\label{def:kinter}
	A fuzzy measure  is called $k$-interactive if for some chosen $K\in [0,1], \mu(A)=K+\frac{|A|-k-1}{n-k-1}(1-K), \forall A \subseteq N, |A|\geq k+1.$
\end{definition}




\begin{definition}\cite{mesiar1999generalizations,mesiar2018k,wuBeliakovkminitive} \label{def-k-maxitive fuzzy measure}
	Let $k \in \{1,..., n\}$. A fuzzy measure $\mu$ is said to be $k$-maxitive if its $k+1$ and higher orders' fuzzy measure values are obtained by $\mu (A)=\vee_{B\subset A,|B|= k} \mu(B), |A|\geq k+1.$

\end{definition}

Some more complex  types of $k$-order representative measures include $k$-additive measure \cite{grabisch1997k}, $k$-nonadditive measure and $k$-nonmodular measure \cite{wuBeliakovNonadd,wuBeliakovNonmodu,wuBeliakov-k-representative}.

\begin{definition} \cite{grabisch1997k}
Let $k \in \{1,..., n\}$, a fuzzy measure $\mu$ on $N$ is said to be $k$-additive if its Möbius representation satisfies $m_\mu(A) = 0$ for all $A \subseteq N$ such that $|A| > k$ and there exists at least one subset $A$ of $k$ elements such that $m_\mu(A) \neq 0$, where the M\"obius representation of subset $A \subseteq N$ of $\mu $ is defined as
${m_\mu }(A) = \sum_{C \subseteq A} {{{( - 1)}^{|A\backslash C|}}\mu (C)}.$
\end{definition}

\begin{definition} \label{def. k nonadd} \cite{wuBeliakovNonadd,wuBeliakov-k-representative}
Let $k \in \{1,..., n\}$, a fuzzy measure $\mu$ on $N$ is said to be $k$-nonadditive if its nonadditivity index satisfies $n_\mu(A) = 0$ for all $A \subseteq N$, $|A| > k$ and there exists at least one subset $A$ of $k$ elements such that $n_\mu(A) \neq 0$, where the nonadditivity index of a subset $A \subseteq N$ of $\mu$ is defined as
${n_\mu }(A) = \mu (A) - \frac{1}{{{2^{|A| - 1}} - 1}} \sum_{C \subset A} {\mu (C)}$.
\end{definition}

\begin{definition} \label{def. k nonmodular} \cite{wuBeliakovNonmodu,wuBeliakov-k-representative}
Let $k \in \{1,..., n\}$, a fuzzy measure $\mu$ on $N$ is said to be $k$-nonmodular if its nonmodularity index satisfies $d_\mu(A) = 0$ for all $A \subseteq N$, $|A| > k$ and there exists at least one subset $A$ of $k$ elements such that $d_\mu(A) \neq 0$, where the nonmodularity index of a subset $A \subseteq N$ of $\mu$ is defined as
${d_\mu }(A) =\mu (A) -\frac{1}{|A|}\sum_{\{i\} \subset A } [\mu(\{i\})+\mu(A\backslash\{i\})]$.
\end{definition}
	
\section{From additive measures to normal fuzzy measures}

\subsection{Linear extension and fuzzy measure generation}

The relationship between the linear extensions of all subsets in $N$ and the fuzzy measures on $N$ can be summarized as the following statement.
\begin{proposition}
  Any fuzzy measure can establish at least one linear extension, and any linear extension can generate numerous fuzzy measures.  
\end{proposition}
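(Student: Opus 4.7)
The plan is to prove the two assertions separately, since they are essentially independent.

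For the first part, given a fuzzy measure $\mu$ I would construct a linear extension $\precsim$ as follows. Define $A \precsim B$ whenever $\mu(A) < \mu(B)$, and when $\mu(A) = \mu(B)$ break ties by any total order on the tie class that refines the inclusion order $\subseteq$ restricted to that class. The existence of such a refinement is the standard fact that every finite partial order extends to a linear order, so one can produce it concretely, e.g., by first sorting tied subsets by cardinality and then lexicographically. The only nontrivial check is that $A \subseteq B$ forces $A \precsim B$: by monotonicity of $\mu$ we have $\mu(A) \le \mu(B)$, and in the equality case the tie-breaking rule was chosen to respect $\subseteq$, so $A \precsim B$ holds. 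Reflexivity, antisymmetry, transitivity and totality all follow directly from the construction.

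For the second part, fix a linear extension $\emptyset = S_0 \precsim S_1 \precsim \cdots \precsim S_{2^n-1} = N$ of $\mathcal{P}(N)$. Pick any nondecreasing sequence $0 = v_0 \le v_1 \le \cdots \le v_{2^n-1} = 1$ and define $\mu(S_i) := v_i$. The boundary condition is immediate, and monotonicity follows because $A \subseteq B$ implies $A \precsim B$ in the linear extension, i.e., the index of $A$ does not exceed that of $B$, so $\mu(A) \le \mu(B)$. The set of admissible sequences $(v_1, \ldots, v_{2^n-2})$ forms an $(2^n-2)$-dimensional polytope in $[0,1]^{2^n-2}$, giving uncountably many distinct fuzzy measures compatible with the chosen extension; this establishes the ``numerous'' claim.

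The main obstacle, modest though it is, lies in the first part: one has to justify why the tie-breaking among subsets sharing the same $\mu$-value can always be arranged to be monotone with respect to $\subseteq$. This reduces to extending a finite partial order to a total order, which is routine, but should be stated explicitly so the reader sees that the linear extension is not merely obtained by sorting the values of $\mu$. The second part is essentially a parameter count and requires no deep argument.
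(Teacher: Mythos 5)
Your proof is correct and follows essentially the same route as the paper: sort subsets by their measure values with an inclusion-respecting tie-break (the paper uses lexical or binary order, which is a concrete instance of your refinement argument), and for the converse assign any nondecreasing sequence of values in $[0,1]$ with endpoints $0$ and $1$ to a fixed linear extension. Your explicit remarks on the tie-breaking and the polytope of admissible value sequences only make the argument slightly more detailed than the paper's.
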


\begin{proof}
First, for a fuzzy measure with unique measure values for all subsets, we can obtain a unique linear extension of all subsets by arranging them in ascending order based on their measure values. If a fuzzy measure has the same values for different subsets, we can sort these subsets with equal measure values according to either the lexical order or binary order. 
Taking $N=\{1,2,3\}$ as an example, the lexical order of all subsets is given as follows.:
\begin{equation*}
		\emptyset, \{1\},\{2\},\{3\},\{1,2\},\{1,3\},\{2,3\},\{1,2,3\};
	\end{equation*}
and the binary order is given as:
	\begin{equation*}
	\emptyset, \{1\},\{2\},\{1,2\},\{3\},\{1,3\},\{2,3\},\{1,2,3\}.
    \end{equation*}
Second, if we have a linear extension of all subsets of $N$, we only need to assign $2^n$ ordered random values from the unit interval to this linear extension to generate a fuzzy measure. Therefore, numerous fuzzy measures can be generated with the same linear extension and these fuzzy measure are comonotonic ($\mu, \nu$ on $N$ are said to be comonotonic, if $\mu(A) \leq \mu(B)$ if and only if  $\nu(A) \leq \nu(B)$, $\forall A, B \subseteq N$.) .
\end{proof}

As special fuzzy measures with only $n$ coefficients, additive measures can be adopted as an efficient way to obtain linear extensions and further generate fuzzy measures. However, one main problem is the high repetition of linear extensions obtained by additive measures for small $n$, e.g, $n\leq 4$, and furthermore, large part of the linear extensions cannot be reached by additive measures.

\subsection{Reduce the repetition of linear extensions obtained by additive measures}

In order to mitigate the repetition of a group of existing linear extensions, we can randomly adjust the positions of some subsets without violating the monotonicity condition for each of these linear orders. For a subset $\emptyset \neq A \subset N$ within a linear extension, the following algorithm, \ref{alg-adj-one-subset}, can be employed to achieve a compatible adjustment of its position, where $\textbf{Pos}({A})$ represents its position index and $A= \arg\textbf{Pos}({A})$.

		\begin{algorithm} [!htb] 
		\caption{Adjust the position of a nonempty proper subset}
		\label{alg-adj-one-subset}
		$\textbf{Pos}_{L}= \max_{i \in A} \textbf{Pos} (A\backslash \{i\})$, \\
		$\textbf{Pos}_{U}= \min_{i \in N\backslash A} \textbf{Pos} (A \cup \{i\})$, \\	
  Reposition set $A$ randomly, if feasible, to a different location within $(\textbf{Pos}_{L}, \textbf{Pos}_{U})$.
	\end{algorithm} 

Indeed, the above operations can be equivalently performed through adjusting the associated additive measures. Suppose a linear extension is derived from an additive measure established by the nonzero measure values of $n$ singletons, i.e., the weight vector $(w_1, w_2,..., w_n)$ where $\sum_i w_i = 1$ and $\mu(A) = \sum_{i\in A} w_i$ for $A\subseteq N$. Then, the following alternative operations can be executed using Algorithm \ref{alg-adj-one-subset-by-measures}.

\begin{algorithm} [!htb] 
		\caption{Adjust the position of $A$ through measure values}
		\label{alg-adj-one-subset-by-measures}
		$\textbf{Int}_{L}= \max_{i \in A} \mu (A\backslash \{i\})$, \\
		$\textbf{Int}_{U}= \min_{i \in N\backslash A} \mu (A \cup \{i\})$, \\	
 Assign $\mu(A)$ a random value from the interval $[\textbf{Int}_{L}, \textbf{Int}_{U}]$, avoiding the range [$\mu(\arg (\textbf{Pos}(A)-1)), \mu(\arg (\textbf{Pos}(A)+1)]$, if feasible.
\end{algorithm} 

In algorithms \ref{alg-adj-one-subset} and \ref{alg-adj-one-subset-by-measures}, it is ensured that adjustments about any nonempty proper subset in $N$ do not break any monotonicity condition with respect to set inclusion.

\subsection{An efficient way for adjusting linear extension}

Another method to adjust linear extension is through random walk \cite{karzanov1991conductance,combarro2013On}, as shown in algorithm \ref{alg-random-walk} starting from the subset $A$ in the linear extension.
\begin{algorithm} [!htb] 
		\caption{Random walk for a linear extension}
		\label{alg-random-walk}
   \If {$|A| \geq |\arg (\textbf{Pos}(A)+1)|$ or $A \nsubseteq \arg (\textbf{Pos}(A)+1)$ }
    {Swap the positions of subsets $A, \arg (\textbf{Pos}(A)+1)$. }
\end{algorithm} 

Algorithm \ref{alg-random-walk} demonstrates that in a random walk, if $A$ is not a subset of $\arg (\textbf{Pos}(A)+1)$, their positions are exchanged. The random walk can be applied to a fuzzy measure $\mu$ by replacing the swap operation with setting $\mu(A)$ as a value between $\mu(\arg (\textbf{Pos}(A)+1))$ and $\mu(\arg (\textbf{Pos}(A)+2))$.

Actually, Algorithm \ref{alg-random-walk} can be altered to a different random walk direction by exchanging the values of $A$ and $\arg (\textbf{Pos}(A)-1)$, under the condition that {$|A| \leq |\arg (\textbf{Pos}(A)-1)|$ or $A\nsupseteq \arg (\textbf{Pos}(A)-1)$}. Consequently, the fuzzy measure value of $\mu(A)$ would be adjusted to lie within the range of $\mu(\arg (\textbf{Pos}(A)-1))$ and $\mu(\arg (\textbf{Pos}(A)-2))$.



Now let's assess the complexity of Algorithms \ref{alg-adj-one-subset} and \ref{alg-random-walk}. The former comprises set differences, minimization and maximization, position indexing, and insertion operations, leading to a complexity of at least $2o(|A|) + 2o(n) + 2o(1)$. The latter involves subset checks, number comparisons, and swap operations, resulting in a complexity ranging from $o(4)$ to $o(|A|) + o(4)$. Hence, one step of random walk proves to be more efficient.

Table \ref{tab-add-adjust-fuzzy} presents average results obtained from generating additive measures for universal sets with 3 to 6 elements over 10 iterations. The first through eighth columns correspond to the number of elements, the number of additive measures\footnotemark, the repetition ratio of linear extensions for the additive measures, the repetition ratio after applying algorithm \ref{alg-adj-one-subset}, and the repetition ratios for one through five steps of random walk, respectively. 
One can observe that one and two steps of random walk can yield comparable, and sometimes even better, results when compared to Algorithm \ref{alg-adj-one-subset}. Employing more steps of random walk can further enhance the performance. For the case of $n=6$, we can conclude that the repetition ratios are all zero for the extensive domain and numerous linear extensions.

\footnotetext{The total number of linear extensions for $n=1$ to $5$ are 1, 2, 48, 1680384, 14807804035657359360, and so on. This sequence is denoted as A046873 in OEIS, with additional values currently unknown \cite{combarro201950}. }

\begin{table}[!htb]\footnotesize
\caption{The repetition ratios of linear extensions.} 
		\label{tab-add-adjust-fuzzy}
\centering 
\begin{tabular}{rrrrrrrrr}
  \hline
 n& Num & Rep. &Alg. \ref{alg-adj-one-subset}  & RW-1 & RW-2 & RW-3 & RW-4 &RW-5 \\ 
  \hline
3&20  &0.6989 & 0.2675 & 0.3530 & 0.2680 & 0.2374 & 0.1910 & 0.1958 \\  
4&1000& 0.7579 & 0.1882 & 0.2228 & 0.0939 & 0.0329 & 0.0312 & 0.0268 \\ 
5 &10000 & 0.0214 & 0.0026 & 0.0020 & 0.0009 & 0.0003 & 0.0000 & 0.0000 \\ 
6 &100000& 0.0000 & 0.0000 & 0.0000 & 0.0000 & 0.0000 & 0.0000 & 0.0000 \\ 
   \hline
\end{tabular}
\end{table}

\section{Adjust additive measures into supermodular measures}

\subsection{Additive core and supermodular measure}


The core of a fuzzy measure $\mu$ on $N$ consists of the collection of additive measures $\nu$ on $N$ satisfying $\nu(A) \geq \mu(A)$ for all subsets $A \subseteq N$ \cite{shapley1953value,grabisch2016:setfunctionbook}. Supermodular measures, also known as convex games, are characterized by having a non-empty core, making them totally balanced \cite{shapley1971cores}. This fact inspires the following proposition, which outlines some strategies to adjust additive measures into supermodular measures.

\begin{proposition}
   For any additive measure $\nu$ on $N$, there exists at least one supermodular measure $\mu$ such that $\nu(A) \geq \mu(A)$ holds for all subsets $A \subseteq N$.
\end{proposition}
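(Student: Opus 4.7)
The plan is to establish the proposition by an explicit construction. The starting observation is that every additive $\nu$ is itself modular, since $\nu(A\cup B)+\nu(A\cap B)=\nu(A)+\nu(B)$ for all $A,B\subseteq N$ by additivity. Hence $\nu$ already satisfies the defining inequality of supermodularity with equality, and the trivial choice $\mu=\nu$ technically witnesses the claim. However, this yields only a modular $\mu$ and so, in the spirit of the surrounding section on \emph{adjusting} additive measures into genuinely supermodular ones, I would aim for a strictly supermodular witness.

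To this end I would construct $\mu$ as a pointwise composition $\mu(A)=\phi(\nu(A))$ for a fixed nondecreasing convex function $\phi\colon[0,1]\to[0,1]$ with $\phi(0)=0$ and $\phi(1)=1$. The first step is to verify that $\mu$ is a fuzzy measure: the boundary conditions follow from the values of $\phi$ at $0$ and $1$, while monotonicity follows from the monotonicity of $\nu$ and the nondecreasing character of $\phi$.

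The key step is supermodularity. Set $a=\nu(A)$, $b=\nu(B)$ and $c=\nu(A\cap B)$; additivity of $\nu$ gives $\nu(A\cup B)=a+b-c$, so the target inequality $\mu(A\cup B)+\mu(A\cap B)\ge\mu(A)+\mu(B)$ becomes $\phi(a+b-c)+\phi(c)\ge\phi(a)+\phi(b)$. Writing $u=a-c\ge 0$ and $v=b-c\ge 0$, this is precisely the increasing-differences property of convex functions, namely $\phi(c+u+v)-\phi(c+v)\ge\phi(c+u)-\phi(c)$, which follows from the non-decreasing-slope characterisation of convexity. Domination $\mu(A)\le\nu(A)$ then follows from the chord bound $\phi(x)\le(1-x)\phi(0)+x\phi(1)=x$, valid for any convex $\phi$ on $[0,1]$ with these boundary values. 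A concrete instance is $\phi(x)=x^{k}$ with integer $k\ge 2$, giving the closed-form supermodular witness $\mu(A)=\nu(A)^{k}$.

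The main obstacle is essentially bookkeeping: one should cover the degenerate cases $A\subseteq B$, $B\subseteq A$, or $A\cap B=\emptyset$ when appealing to the supermodularity definition, but these reduce either to equality or directly to the general case already treated. No deeper difficulty is anticipated, and in fact the same template (replace $\phi$ by a strictly convex function) will deliver the strict supermodularity promised by the section heading.
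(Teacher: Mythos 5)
Your proposal is correct, but it takes a genuinely different route from the paper's. The paper argues exactly as your opening remark does that an additive measure is already supermodular (this settles the statement, and in particular the case where some singleton weight vanishes), and then, for $\nu$ with all $\nu(\{i\})>0$, produces a nontrivial witness by the constant-shift strategy (S1): $\mu(A)=\nu(A)-\eta$ for all $\emptyset\neq A\subset N$ with $0<\eta<\min_{i\in N}\nu(\{i\})$, checking supermodularity through the marginal-contribution characterization $\Delta_i\mu(A)\geq\Delta_i\mu(A\setminus\{j\})$, which holds with strict inequality for $|A|\in\{1,n-1\}$ and with equality otherwise. You instead distort the measure values, taking $\mu=\phi\circ\nu$ for a nondecreasing convex $\phi$ with $\phi(0)=0$, $\phi(1)=1$ (concretely $\mu(A)=\nu(A)^{k}$), deriving supermodularity from the modularity of $\nu$ together with the increasing-differences property of convex functions, and the domination $\mu\leq\nu$ from the chord bound $\phi(x)\leq x$; all steps check out, including the reduction $\nu(A\cup B)=\nu(A)+\nu(B)-\nu(A\cap B)$. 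Your construction has the advantage of a single closed-form formula with no case split on zero weights, and with a strictly convex $\phi$ it also delivers the strict supermodularity of the paper's Corollary in one stroke --- though only under that corollary's hypothesis $\nu(\{i\})\neq 0$ for all $i$, since if some weight is zero one gets $u=0$ or $v=0$ in your increasing-differences step and the inequality collapses to equality. What the paper's shift (S1)/(S2) buys in exchange is a family of witnesses parametrized by $\eta$ that feeds directly into the later allowable-range and random-walk adjustments; since $\phi$ is strictly increasing in your concrete choice, your $\mu$ likewise preserves the linear extension of $\nu$, so the two constructions are equally usable there.
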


\begin{proof}
    The fact that an additive measure is still a supermodular measure ensures that this result holds for additive measures with some singleton's measure values being zeros. For an additive measure $\nu$ with $\nu(\{i\}) \neq 0, i\in N$, we can get a different supermodular measure through the following adjustment strategy:
    
    \textbf{(S1)}: $\mu(A)=\nu(A)-\eta$, $A\subset N, A\neq \emptyset$, $\eta \in (0, \min_{i\in N} \nu(\{i\}) )$.

\noindent
Taking account of the statement that a fuzzy measure $\mu$ on $N$ is supermodular if and only if \cite{grabisch2016:setfunctionbook} 
     \begin{equation} \label{eq-nonmo-mar}
		\Delta_i \mu(A) \geq \Delta_i \mu(A \backslash \{j\}),  j \in A,  i \notin A,  A \subset N, 
	\end{equation}
where $\Delta_i \mu(A)= \mu(A\cup \{i\})-\mu(A)$, we can observe that, for the $\mu$ adjusted from $\nu$, Eq. \eqref{eq-nonmo-mar} takes the direction of $>$ for $|A|=n-1,1$, and $=$ otherwise, indicating that it is a supermodular measure.
\end{proof}

\begin{corollary}
    For an additive measure $\nu$ with $\nu(\{i\}) \neq 0, \forall i\in N$, there exists a strictly supermodular measure $\mu$ such that $\nu(A) \geq \mu(A)$, $\forall A \subseteq N$.
\end{corollary}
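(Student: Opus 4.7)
The plan is to refine the construction \textbf{(S1)} from the preceding proposition by replacing the uniform shift $\eta$ with one that depends on $|A|$. The motivation is that (S1) only produces strict inequality in \eqref{eq-nonmo-mar} at the boundary sizes $|A|\in\{1,n-1\}$; in the interior the constant shift cancels on both sides of the marginal inequality. Any correction of the form $\nu(A)-f(|A|)$ preserves the $\nu$-part of the marginal condition by additivity of $\nu$, so the slack in supermodularity is controlled entirely by the concavity of $f$.

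Concretely, I would take
\begin{equation*}
\mu(A) = \nu(A) - \varepsilon\,|A|\,(n-|A|), \qquad A\subseteq N,
\end{equation*}
for a small parameter $\varepsilon>0$. The quadratic $|A|(n-|A|)$ is chosen because it vanishes at $|A|\in\{0,n\}$ (preserving $\mu(\emptyset)=0$ and $\mu(N)=1$) and is strictly concave in $|A|$. The inequality $\nu(A)\geq\mu(A)$ is then automatic.

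I would then verify the remaining axioms in two steps. For monotonicity, a routine computation of $\Delta_i\mu(A)$ yields a lower bound of $\nu(\{i\})-\varepsilon(n-1)$, attained at $|A|=0$; since the hypothesis forces $\nu(\{i\})>0$, any $\varepsilon\in\bigl(0,\min_{i\in N}\nu(\{i\})/(n-1)\bigr)$ secures it, and $\mu\in[0,1]$ then follows from the boundary values. For strict supermodularity, I would plug $\mu$ into \eqref{eq-nonmo-mar}: the $\nu$-parts of $\Delta_i\mu(A)-\Delta_i\mu(A\setminus\{j\})$ cancel by additivity, while the quadratic correction contributes exactly $2\varepsilon>0$ independently of $A$, $i$, $j$.

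The main obstacle is identifying the right form of the correction: $f$ must vanish at both extremes of $|A|$, be strictly concave, and be small enough to fit within the monotonicity budget set by $\min_i\nu(\{i\})$. The quadratic $|A|(n-|A|)$ is the simplest and most symmetric choice meeting all three constraints; any other strictly concave function vanishing at $0$ and $n$ would also work after suitable rescaling.
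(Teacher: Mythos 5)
Your proposal is correct and is essentially the paper's own argument: the paper's strategy \textbf{(S2)} subtracts from $\nu$ a cardinality-dependent amount $\sum_{i=|A|}^{n-1}\eta_i$ with strictly increasing increments $\eta_i$, which is precisely a strictly concave function of $|A|$ kept below $\min_{i\in N}\nu(\{i\})$ to preserve monotonicity, and your $\varepsilon\,|A|\,(n-|A|)$ is an explicit instance of that shift, giving the constant slack $2\varepsilon$ where the paper gets $\eta_{|A|}-\eta_{|A|-1}>0$. Your verification of the boundary conditions, monotonicity budget, and strict inequality in \eqref{eq-nonmo-mar} mirrors the paper's computation, so the two proofs coincide in substance.
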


\begin{proof}
    For the additive measure $\nu$, if $n=2$, we just use strategy (S1) to get a strictly supermodular measure. For $n \geq 3$, we can do the following adjustment strategy:
    
         \textbf{(S2)}: $\mu(A)=\nu(A)-\sum_{i=|A|}^{n-1}\eta_i, 1<|A|\leq n-1,$ where $0<\eta_1<\eta_2<\dots<\eta_{n-1}, \sum_{i=|A|}^{n-1}\eta_i < \min_{i\in N} \nu(\{i\}) $.

 \noindent      
We can have,  
\begin{equation*}
    \begin{split}
        \Delta_i \mu(A)&=\mu(A\cup \{i\})-\mu(A)
        \\&=\nu(A\cup \{i\})-\sum_{i=|A+1|}^{n-1}\eta_i-\nu(A)+\sum_{i=|A|}^{n-1}\eta_i
        \\&=\nu(\{i\})+ \eta_{|A|}
    \end{split}
\end{equation*}
Hence, $\Delta_i \mu(A) - \Delta_i \mu(A \backslash \{j\})= \nu(\{i\})+ \eta_{|A|}- \nu(\{i\})- \eta_{|A-1|}=\eta_{|A|}-eta-\eta_{|A-1|}>0$. According to Eq. \eqref{eq-nonmo-mar}, we can conclude that $\mu$ is a strictly supermodular measure.
 
\end{proof}






   




\subsection{Supermodularity adjusting through allowable range}

By employing adjustment strategies (S1) and (S2), we can transform an additive measure into a supermodular measure, while their linear extensions remain unchanged. To decrease the repetition ratio of linear extensions and enhance supermodularity, additional refinement of the measure values becomes necessary.

Actually, the Eq. \eqref{eq-nonmo-mar} can be rewritten as \cite{grabisch2016:setfunctionbook}
 \begin{equation} \label{eq-nonmo-quad}
\Delta_{ij} \mu(A) =\mu(A\cup\{i,j\})-\mu(A\cup\{i\})-\mu(A\cup\{j\})+\mu(A) \geq 0.
\end{equation}
Based on this equation, we can determine the permissible range for any nonempty proper subset $A$ to ensure supermodularity as:

\begin{algorithm} [!htb] 
\caption{Adjust $\mu(A)$ in a supermodular measure}
\label{alg-adj-measure-in-supermod}
$l_1=\max_{i,j\in A}(\mu(A\backslash \{i\})+\mu(A\backslash \{j\})-\mu(A\backslash \{i,j\}))$, \\
      $l_2=\max_{i,j\notin A}(\mu(A \cup \{i\})+\mu(A \cup \{j\})-\mu(A \cup \{i,j\}))$,\\
      $l_3=\min_{i\in A, j\notin A} (\mu(A \cup \{j\})-\mu(A \backslash \{i\} \cup \{j\})+ \mu(A \backslash \{i\}))$,\\	
 Assign $\mu(A)$ a random value in $[\max(l_1,l_2), l_3]$, avoiding the range [$\mu(\arg (\textbf{Pos}(A)-1)), \mu(\arg (\textbf{Pos}(A)+1)]$, if feasible.
\end{algorithm} 



By adjusting $\mu(A)$ within the allowable range $[\max(l_1,l_2), l_3]$, 
it will result in a new linear extension if $\mu(A)$ not in the range [$\mu(\arg (\textbf{Pos}(A)-1)), \mu(\arg (\textbf{Pos}(A)+1)]$. 




\subsection{Supermodularity adjusting through random walk}

Achieving a new linear extension through random walk for a supermodular measure requires additional checks on measure values beyond the set inclusion conditions listed in Algorithm \ref{alg-random-walk}.

\begin{algorithm} [!htb] 
		\caption{Random walk for supermodular measure}
		\label{alg-random-walk-supermodu}
   \If {$|A| \geq |\arg (\textbf{Pos}(A)+1)|$ or $A \nsubseteq \arg (\textbf{Pos}(A)+1)$ }
    { \If{$\mu(\arg (\textbf{Pos}(A)+1)) \leq l_3$ }
    {
    Set $\mu(A)$ as a value between $\mu(\arg (\textbf{Pos}(A)+1))$ and $\min(l_3, \mu(\arg (\textbf{Pos}(A)+2)))$.
    } }
\end{algorithm}

If the value of $\mu(A)$ is increased, then $\Delta_i \mu(A)$ will decrease and $\Delta_i \mu(A\backslash \{i\})$ will increase. To maintain supermodularity, it's necessary to verify the conditions $\Delta_i \mu(A)\geq \Delta_i \mu(A \backslash \{j\})$ for $i\notin A$ and $j \in A$, and $\Delta_i \mu(A\backslash \{i\}) \leq \Delta_i \mu(A\backslash \{i\} \cup \{j\} )$ for $ i\in A$ and $j\notin A$. However, these two conditions are essentially equivalent (both collapsing to the condition that $\mu(A)\leq l_3$ given in Algorithm \ref{alg-adj-measure-in-supermod}). The inclusion of $\min(l_3, \mu(\arg (\textbf{Pos}(A)+2)))$ in Algorithm \ref{alg-random-walk-supermodu} ensures that the position of $A$ is swapped only with $(\textbf{Pos}(A)+1))$ if it is feasible.

Conversely, if the value of $\mu(A)$ is decreased (this is also allowed), then $\Delta_i \mu(A)$ will increase and $\Delta_i \mu(A\backslash \{i\})$ will decrease. To maintain supermodularity after random walk in this direction, we need to check the conditions $\Delta_i \mu(A) \leq \Delta_i \mu(A \cup \{j\})$ for $i,j\notin A$, and $\Delta_i \mu(A\backslash \{i\}) \geq \Delta_i \mu(A\backslash \{i,j\} )$ for $ i,j\in A$, which is equivalent to check the conditions that $\mu(A)\geq l_2$ and $\mu(A)\geq l_1$, as given in Algorithm \ref{alg-adj-measure-in-supermod}. Then if feasible, the $\mu(A)$ can be set as a value between $\mu(\arg (\textbf{Pos}(A)-1))$ and $\max (l_1,l_2, \mu(\arg (\textbf{Pos}(A)-2)))$.

\subsection{Generate antibuoyant measure from uniform measure}

The antibuoyant measure is a special type of supermodular measure \cite{beliakov2022choquet}, which can be defined through marginal contributions. 
A fuzzy measure is antibuoyant if \cite{beliakov2021choquet}
 \begin{equation*}
 \Delta_i( A \cup \{j\}) \geq \Delta_j( A ), \forall A\subseteq N \backslash \{i,j\}, \forall i,j \in N.
    \end{equation*}

The core of an antibuoyant measure is a uniform (additive and symmetric) measure \cite{beliakov2022choquet}, specifically an additive measure $\nu$ on $N$ with $\nu(i)=\frac{1}{n}$. Therefore, starting from the uniform measure, we can apply the adjustment strategies \textbf{(S1)} and \textbf{(S2)} to obtain an antibuoyant measure and a strictly antibuoyant measure, respectively.

The allowable range of $\mu(A)$ that ensures still an antibuoyant measure should be:
\begin{equation}
    \mu(A) \in [\max(l_4, l_5), l_6]
\end{equation}
where 
$l_4=\max_{i,j \in A} (2 \mu(A \backslash \{i\})-\mu(A \backslash \{i,j\}, 2 \mu(A \backslash \{j\})-\mu(A \backslash \{i,j\} )$, $l_5=\max_{i,j \notin A} (2 \mu(A \cup \{i\})+\mu(A \cup \{i,j\}, 2 \mu(A \cup \{j\})+\mu(A \cup \{i,j\}))$, $l_6=\min_{i \notin A, j\in A} \frac{1}{2}(\mu(A \cup \{i\}+\mu(A \backslash \{j\}))$.

The random walk of $\mu(A)$ for an antibuoyant measure can be given as

\begin{algorithm} [!htb] 
		\caption{Random walk for antibuoyant measure}
		\label{alg-random-walk-antibuo}
   \If {$|A| \geq |\arg (\textbf{Pos}(A)+1)|$ or $A \nsubseteq \arg (\textbf{Pos}(A)+1)$ }
    {
     \If{$\mu(\arg (\textbf{Pos}(A)+1)) \leq l_6$ }
    {
    Set $\mu(A)$ as a value between $\mu(\arg (\textbf{Pos}(A)+1))$ and $\min(l_6, \mu(\arg (\textbf{Pos}(A)+2)))$.
    } 
    }
\end{algorithm} 

On the contrary, if we want to decrease the value of $\mu(A)$ to get a new linear extension for the antibuoyant measure, we need to check whether $\mu(\arg (\textbf{Pos}(A)-1)) \geq l_4$ and $\mu(\arg (\textbf{Pos}(A)-1)) \geq l_5$, then if feasible, set $\mu(A)$ as a value between $\mu(\arg (\textbf{Pos}(A)-1))$ and $\min(l_4, l_5,  \mu(\arg (\textbf{Pos}(A)-2)))$.

\section{Adjust additive measures into superadditive measures}

Supermodularity trivially implies superadditivity. Hence for an additive measure $\nu$ on $N$, there must exist a superadditive measure $\mu$ such that $\mu (A)\leq \nu(A), A \subseteq N$.

\begin{proposition}
   For any additive measure $\nu$ on $N$, n>3, with $ i_0 \in N, \nu(\{i_0\}) \neq 0$,  there exists at least one superadditive, but not necessarily supermodular, measure $\mu$ such that $\nu(A) \geq \mu(A)$ holds for all subsets $A \subseteq N$.
\end{proposition}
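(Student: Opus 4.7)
The plan is to construct $\mu$ by perturbing $\nu$ at only the singleton $\{i_0\}$, decreasing its value slightly. This minimal perturbation breaks supermodularity on a configuration involving two $2$-element sets sharing $i_0$, while every superadditive inequality still holds, with strict improvement precisely where one of the disjoint sets is $\{i_0\}$.

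First I would fix $\delta\in(0,\nu(\{i_0\}))$ and define
\[
\mu(A)=\begin{cases}\nu(\{i_0\})-\delta & \text{if } A=\{i_0\},\\ \nu(A) & \text{otherwise.}\end{cases}
\]
Dominance $\mu\le\nu$ is then immediate. Boundary conditions hold because $n>3$ forces $\{i_0\}\neq N$, so $\mu(N)=\nu(N)=1$. Monotonicity only needs verification where $\{i_0\}$ sits: $\mu(\emptyset)\le\mu(\{i_0\})$ reduces to $\delta<\nu(\{i_0\})$, and for any $A\supsetneq\{i_0\}$, additivity of $\nu$ yields $\mu(A)=\nu(\{i_0\})+\nu(A\setminus\{i_0\})\ge\nu(\{i_0\})>\mu(\{i_0\})$.

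Next I would check superadditivity by splitting on whether $\{i_0\}$ appears among the disjoint pair $A,B$. If neither equals $\{i_0\}$, the relevant values of $\mu$ coincide with $\nu$ and additivity of $\nu$ forces equality in the superadditive inequality; if, say, $A=\{i_0\}$ and $B\ne\emptyset$ is disjoint from it, then $\mu(A\cup B)-\mu(A)-\mu(B)=\nu(\{i_0\}\cup B)-(\nu(\{i_0\})-\delta)-\nu(B)=\delta>0$. Non-supermodularity is then witnessed at $A=\{i_0,j\}$ and $B=\{i_0,k\}$ for distinct $j,k\in N\setminus\{i_0\}$, which exist because $n>3$ gives $|N\setminus\{i_0\}|\ge 3$. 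A direct computation using additivity of $\nu$ shows $\mu(A)+\mu(B)-\mu(A\cup B)-\mu(A\cap B)=\delta>0$, violating supermodularity at the pair $(A,B)$.

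The main obstacle is the tension between preserving superadditivity and actively breaking supermodularity: a broad size-indexed decrease in the spirit of strategies \textbf{(S1)} or \textbf{(S2)} either jeopardises $\mu\ge 0$ when singletons other than $\{i_0\}$ vanish, or accidentally restores supermodularity by producing a concave profile in $|A|$. Localising the perturbation to the single set $\{i_0\}$ resolves this, because only the supermodular inequalities at pairs sharing $i_0$ feel the decrease, whereas every disjoint union involving $\{i_0\}$ is strictly improved by $\delta$.
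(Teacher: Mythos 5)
Your construction is correct: decreasing $\nu$ only at the singleton $\{i_0\}$ preserves the boundary and monotonicity conditions and dominance $\mu\le\nu$, every superadditivity inequality holds (with equality when neither disjoint set is $\{i_0\}$, since then $A\cup B\neq\{i_0\}$ as well, and with strict surplus $\delta$ otherwise), and the pair $A=\{i_0,j\}$, $B=\{i_0,k\}$ gives $\mu(A)+\mu(B)-\mu(A\cup B)-\mu(A\cap B)=\delta>0$, an explicit failure of supermodularity. This is close in spirit to the paper's proof but is a genuinely different instance: the paper's strategy \textbf{(S3)} subtracts $\eta$ from \emph{all} sets $A$ with $i_0\in A\subseteq B$ for some chosen $B\subset N$ with $|B|>2$ (a choice that explicitly excludes your single-set perturbation), asserts superadditivity without the case analysis, and only remarks that the supermodularity condition \eqref{eq-nonmo-mar} ``will not necessarily hold.'' Your version buys a stronger and fully verified conclusion — the resulting measure is provably not supermodular, with a concrete witness — whereas the paper's version is weaker on this point (indeed, for $|B|=n-1$ the measure produced by \textbf{(S3)} can remain supermodular, since a violation of $\Delta_{ij}\mu(C)\ge 0$ there requires two elements outside $B$); on the other hand, \textbf{(S3)} describes a broader family of perturbations. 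Two trivial points you could spell out for completeness: in your first case one should note that $A\cup B=\{i_0\}$ forces one of the disjoint sets to equal $\{i_0\}$, and in your second case the subcase $B=\emptyset$ gives equality; neither affects correctness.
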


\begin{proof}
 For a subset $B \subset N, |B|>2, i_0 \in B$, if we use the following adjustment strategy: 
\begin{equation*}
     \textbf{(S3): } \mu(A)= 
     \begin{cases}
      \nu(A)-\eta &\text{ if } i_0 \in A\subseteq B; \\
      \nu(A) &\text{ otherwise }. 
 \end{cases}
\end{equation*}
where $\eta \in (0, \nu(\{i_0\})$. We can have $\mu$ as a superadditive measure satisfying definition \ref{def.superadd}, but the Eq. \eqref{eq-nonmo-mar} will not necessarily hold, and therefore, it might not be a supermodular measure.

\end{proof}

\begin{corollary}
    For an additive measure $\nu$ on $N$, n>3, with $\nu(\{i\}) \neq 0,  \forall i\in N$, there exists a strictly superadditive, but not necessarily supermodular, measure $\mu$ such that $\nu(A) \geq \mu(A)$, $\forall A \subseteq N$.
\end{corollary}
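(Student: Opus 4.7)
The plan is to construct $\mu$ from $\nu$ by subtracting a quantity that depends only on the cardinality of the subset, analogously to how strategy \textbf{(S2)} refined \textbf{(S1)} in the supermodular setting. Concretely I would introduce the strategy

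\textbf{(S4)}: $\mu(A) = \nu(A) - \eta_{|A|}$, $A \subseteq N$,

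where $(\eta_k)_{k=0}^{n}$ is a fixed sequence satisfying $\eta_0 = \eta_n = 0$, $\eta_k > 0$ for $1 \le k \le n-1$, and the strict subadditivity condition $\eta_i + \eta_j > \eta_{i+j}$ for all $i,j \ge 1$ with $i+j \le n$. A convenient concrete instantiation is $\eta_k = k(n-k)\epsilon$ with a small parameter $\epsilon > 0$.

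The verification would proceed in three steps. First, the boundary conditions $\mu(\emptyset)=0$ and $\mu(N)=1$, together with the domination $\nu(A) \ge \mu(A)$, are immediate because $\eta_0 = \eta_n = 0$ and $\eta_{|A|} \ge 0$. Second, for strict superadditivity, let $A, B$ be disjoint and nonempty subsets of $N$; since $\nu$ is additive, the difference collapses to
\[
\mu(A \cup B) - \mu(A) - \mu(B) = \eta_{|A|} + \eta_{|B|} - \eta_{|A|+|B|},
\]
which is strictly positive by the subadditivity of $(\eta_k)$, and for the concrete choice equals $2|A||B|\epsilon$. Third, for monotonicity it suffices to check single-element extensions: $\mu(A \cup \{i\}) - \mu(A) = \nu(\{i\}) - (\eta_{|A|+1} - \eta_{|A|})$, and with $\eta_k = k(n-k)\epsilon$ the maximum of $\eta_{k+1} - \eta_k$ over $k$ is $(n-1)\epsilon$, so $\epsilon \le \min_{i \in N} \nu(\{i\})/(n-1)$ suffices (this bound also secures $\mu(A) \ge 0$ for all $A$).

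The main obstacle is reconciling the lower bound on $\eta_1$ forced by strict superadditivity on pairs of singletons (which forces $\eta_1 > \eta_2/2 > 0$, since otherwise one obtains $\mu(\{i,j\}) \le \mu(\{i\}) + \mu(\{j\})$) with the upper bound on the same quantity forced by monotonicity. The hypotheses $n > 3$ and $\nu(\{i\}) \ne 0$ for every $i \in N$ supply the needed slack, because $\min_{i \in N} \nu(\{i\})/(n-1)$ is then strictly positive, so a feasible $\epsilon > 0$ exists. The phrase \emph{not necessarily supermodular} in the statement reflects that the construction is tailored to guarantee only strict superadditivity; verifying whether the stronger inequality \eqref{eq-nonmo-quad} also holds is not required.
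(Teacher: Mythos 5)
Your construction is correct: with $\mu(A)=\nu(A)-|A|(n-|A|)\epsilon$ and $\epsilon\le\min_{i\in N}\nu(\{i\})/(n-1)$, the boundary, domination and monotonicity checks go through, and for disjoint nonempty $A,B$ the superadditivity gap is exactly $2|A||B|\epsilon>0$, so the stated existence claim holds (your argument does not even need the hypothesis $n>3$). It is, however, a genuinely different route from the paper's, whose proof invokes strategy \textbf{(S4)}: subtracting small amounts $\eta_A\in\bigl(0,\min_{i\in A}\nu(\{i\})/n\bigr)$ attached to the $(n-1)$-element subsets $A$, i.e.\ a perturbation organized by top-level coalitions rather than by cardinality, with the verification left implicit. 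Your cardinality-profile perturbation is in effect a concrete instance of the paper's supermodular strategy \textbf{(S2)}: since the discrete second difference of $k\mapsto k(n-k)$ equals $-2$, one gets $\Delta_{ij}\mu(A)=2\epsilon>0$ in Eq.~\eqref{eq-nonmo-quad}, so your $\mu$ is in fact strictly supermodular, hence a fortiori strictly superadditive. This is perfectly sufficient for the corollary as written, because ``not necessarily supermodular'' is a hedge rather than a requirement, but it means your proof essentially re-derives the earlier supermodular corollary instead of exhibiting a superadditivity-specific adjustment, which is what strategies \textbf{(S3)} and \textbf{(S4)} are meant to illustrate in this section. What each approach buys: yours gives a fully verified, explicit bound on the perturbation and works for all $n\ge 2$; the paper's gives a perturbation pattern targeted at superadditivity alone (intended to allow supermodularity to fail), at the cost of an ambiguously stated strategy and an unverified claim.
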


\begin{proof}
    By using the following adjustment strategy:
    
    \textbf{(S4):} For each $A \subseteq N, |A|=n-1$, $\mu(B)=\nu(B)-\eta_A, \eta_A \in (0, \frac{\min_{i \in A} \nu(\{i\})}{n})$,

    \noindent we can have a strictly superadditive, but not necessarily supermodular, measure.
\end{proof}

Without violating the superadditivity of a fuzzy measure $\mu$, we can further adjust the measure value of a  subset $A, \emptyset \neq A\subset N,$ within its allowable range as:
\begin{equation} \label{eq-nonadd-allowable-range}
    \mu(A) \in [l_7, l_8]
\end{equation}
where $l_7=\max_{\emptyset \neq B\subset A} (\mu(B)+\mu(A\backslash B))$, $l_8=\min_{A\subset C \subset N} (\mu(C)-\mu(C\backslash A))$.
Similarly, if the adjusted value of $\mu(A)$ is not in the interval $[\mu(\arg (\textbf{Pos}(A)-1)), \mu(\arg (\textbf{Pos}(A)+1)]$, a new linear extension for  superadditive measure can be obtained. 

Alternatively, the following random walk can be employed for a superadditive measure, see algorithm \ref{alg-random-walk-superadd}.

\begin{algorithm} [!htb] 
		\caption{Random walk for superadditive measure}
		\label{alg-random-walk-superadd}
   \If {$|A| \geq |\arg (\textbf{Pos}(A)+1)|$ or $A \nsubseteq \arg (\textbf{Pos}(A)+1)$ }
    {
     \If{$\mu(\arg (\textbf{Pos}(A)+1)) \leq l_8$ }
    {
    Set $\mu(A)$ as a value between $\mu(\arg (\textbf{Pos}(A)+1))$ and $\min(l_8, \mu(\arg (\textbf{Pos}(A)+2)))$.
    } 
    }
\end{algorithm} 

On the contrary, if we want to decrease the value of $\mu(A)$ to get a new linear extension, we need to check whether $\mu(\arg (\textbf{Pos}(A)-1)) \geq l_7$, then if feasible, set $\mu(A)$ as a value between $\mu(\arg (\textbf{Pos}(A)-1))$ and $\min(l_7, \mu(\arg (\textbf{Pos}(A)-2)))$.





\section{Generation of special measures from additive measures}
\subsection{ Vectors based generations for $p$-symmetric measures}

The allowable range and random walk of regular fuzzy measures can be smoothly adapted to $p$-symmetric measures \cite{combarro201950} by representing any subset $S$ as a $p$-dimensional vector $\mathbf{b}_S=(b_1,..., b_p)$ as defined in \ref{def.p.symmetric}. 
The subset inclusion relation can be transformed into a partial order of $p$-dimensional vectors: $\mathbf{a} \leq \mathbf{b}$ if $a_i \leq b_i$ for $i \in (1, \ldots, p)$. The relationship between sets $A$ and $B=A\backslash \{i\}$ can be represented as $\sum(\mathbf{b}_{A}-\mathbf{b}_{B})=1$. 
For the initial additive measures for $p$-symmetric measures, the singletons in the same partition of the basis should have the same weight, i.e., $\mathbf{b}_{\{i\}}=\mathbf{b}_{\{j\}}$ implies $\nu(\{i\})=\nu(\{j\})$.
As a result, the algorithms and adjustment strategies for normal fuzzy measures, supermodular, superadditive, and buoyant measures are applicable to $p$-symmetric measures. Actually, any normal fuzzy measure can be seen as the $n$-symmetric measure with the basis of $N$, with all subsets being represented as 0-1 valued $n$-dimensional vectors.

\subsection{Adjustments focusing on lower $k$-order subsets}

When considering $k$-tolerant measures, $k$-interactive measures, and $k$-maxitive measures, our primary focus lies in addressing the random walk or measure value adjustments for subsets with orders lower than $k$.
Suppose we already have a fuzzy measure $\nu(A)$ on $N$, no matter it is normal, additive, supermodular, antibuoyant or superadditive fuzzy measure, we can get
\begin{itemize}
\item  a $k$-tolerant measure $\mu$ by setting
\begin{equation}
    \mu(A)= \begin{cases}
         \mu(A)=\nu(A)  & |A|\leq k;\\
         \mu(A)=1     &|A|\geq k+1.
    \end{cases}
\end{equation}
\item a $k$-interactive measure $\mu$ by setting
\begin{equation}
    \mu(A)= \begin{cases}
         \mu(A)=\frac{K \nu(A)}{\max_{|B|=k} \nu(B)}  & |A|\leq k;\\
         \mu(A)=K+\frac{|A|-k-1}{n-k-1}(1-K)     &|A|\geq k+1.
    \end{cases}
\end{equation}
\item a $k$-maxitive measure $\mu$ by setting
\begin{equation}
    \mu(A)= \begin{cases}
         \mu(A)=\frac{\nu(A)}{\max_{|B|=k} \nu(B)}  & |A|\leq k;\\
         \mu(A)=\max_{|B|=|A|-1} \mu(B)    &|A|\geq k+1.
    \end{cases}
\end{equation}
\end{itemize}

We can verify that all the operations mentioned above consistently maintain the monotonicity and normalization conditions, while also satisfying the specific requirements of the three special types of measures.

\subsection{More complex computations for higher order subsets}

For $k$-additive measures, $k$-nonadditive measures, and $k$-nonmodular measures, maintaining the zero value for $k+1$ and higher interaction indices magnifies the complexity of adjustment strategies and random walks for the lower $k$-order subsets.

\subsubsection{$k$-additive measure} 

To obtain a $k$-order additive measure $\mu$ from an additive measure $\nu$, we need to adjust the lower $k$-order subsets according to the requirement of zero values for $k+1$ and higher order M\"obius representations.

For an additive measure $\nu$ on $N$, we can adopt the following allowable range for $\nu(A), |A|\leq k,$  to keep monotonicity conditions as well as zero values for $k+1$ and higher order M\"obius representations:
\begin{equation} \label{eq.allow-range-k-additive}
     [\nu(A)-\min(a_1,a_2,a_3), \nu(A)+\min(a_4,a_5,a_6)]
\end{equation}
where $a_1=\nu(A)-\max_{i \in A} \nu (A\backslash \{i\}),$
\begin{equation*}
   a_2= \begin{cases}
       \min_{j \in B, |B|= k+1, A\subset B }(\nu(B)-  \nu(B\backslash \{j\})) & \text{if $k+1-|A|$ is even}; \\
\min_{j \in N\backslash B, |B|= k+1, A\subset B } (\nu(B\cup \{j\})-\nu(B))/2 & \text{if $k+1-|A|$ is odd}, \\
\end{cases}
\end{equation*}
\begin{equation*}
   a_3= \begin{cases}
\min_{|B|> k+1, A\subset B}((\nu(B)-\max_{i \in A }  \nu(B\backslash \{i\}))/2,&\\
\qquad\qquad\qquad\quad \nu(B)- \max_{j \in B\backslash A } \nu(B\backslash \{j\}))  & \text{if $|B|-|A|$ is even}; \\
\min_{j \in N\backslash B, |B|> k+1, A\subset B } (\nu(B\cup \{j\})-\nu(B))/2 & \text{if $|B|-|A|$ is odd},\\
\end{cases}
\end{equation*}


$a_4= \min_{i \in N\backslash A} \nu (A \cup \{i\})-\nu(A)$,	
\begin{equation*}
   a_5= \begin{cases}
       \min_{j \in N\backslash B, |B|= k+1, A\subset B } (\nu(B\cup \{j\})-\nu(B))/2 & \text{if $k+1-|A|$ is even}; \\
\min_{j \in B, |B|= k+1, A\subset B }(\nu(B)-  \nu(B\backslash \{j\})) & \text{if $k+1-|A|$ is odd}, \\
\end{cases}
\end{equation*}
\begin{equation*}
   a_6= \begin{cases}
   \min_{j \in N\backslash B, |B|> k+1, A\subset B } (\nu(B\cup \{j\})-\nu(B))/2 & \text{if $|B|-|A|$ is even};\\
\min_{|B|> k+1, A\subset B}((\nu(B)-\max_{i \in A }  \nu(B\backslash \{i\}))/2,&\\
\qquad\qquad\qquad\quad \nu(B)- \max_{j \in B\backslash A } \nu(B\backslash \{j\}))  & \text{if $|B|-|A|$ is odd}. \\
\end{cases}
\end{equation*}
Therefore, the adjustment strategy for obtaining a $k$-additive measure $\mu$ from an additive measure $\nu$ can be as follows:

\textbf{(S5):} choose a subset $A$, $0<|A|\leq k$, assign $\mu(A)$ a value within $[\nu(A)-\min(a_1,a_2,a_3), \nu(A)+\min(a_4,a_5,a_6)]$ that is not equal to $\nu(A)$ if feasible, then to ensure the zero values for $k+1$ and higher order M\"obius representations, then we need to set
\begin{equation}
    \mu(B)=\begin{cases} \label{eq.adjust.k.additive}
        \nu(B)     &\text{if } B\neq A, |B|\leq k, \\&\text{or } A \nsubseteq B, |B|> k;\\
        \nu(B)+(-1)^{|B|-|A|}(\mu(A)-\nu(A))  &\text{if }  A\subset B, |B|>k.\\
    \end{cases}
\end{equation}
Finally, normalize measure values by setting $\mu(B)=\mu(B)/\mu(N)$ for all $B \subseteq N$.

The random walk for the lower $k$-order subsets can be given in Algorithm \ref{alg-random-walk-k-additive}:
\begin{algorithm} [!htb] 
\caption{Random walk for $k$-additive measure}
\label{alg-random-walk-k-additive}
   \If {$|A| \geq |\arg (\textbf{Pos}(A)+1)|$ or $A \nsubseteq \arg (\textbf{Pos}(A)+1)$ }
    { \If{$\mu(\arg (\textbf{Pos}(A)+1)) \leq \mu(A)+\min(a_4,a_5,a_6)$ }
    {
    Set $\mu(A)$ as a value between $\mu(\arg (\textbf{Pos}(A)+1))$ and $\min(\mu(A)+\min(a_4,a_5,a_6), \mu(\arg (\textbf{Pos}(A)+2)))$,\\
    Adjust the measure vales according to strategy \textbf{(S5)}.
    } }
\end{algorithm} 

Similarly, if we intend to perform a random walk by decreasing the value of $\mu(A)$, a feasibility check should be conducted to ensure that $\mu(\arg (\textbf{Pos}(A)-1)) \geq \mu(A) - \min(a_1, a_2, a_3)$. If this condition is met, we can set $\mu(A)$ to a value between $\mu(\arg (\textbf{Pos}(A)-1))$ and $\min(\nu(A) - \min(a_1, a_2, a_3), \mu(\arg (\textbf{Pos}(A)-2)))$, while still needing to adjust the measure values according to strategy \textbf{(S5)}.

\subsubsection{$k$-nonadditive measure}

Aiming to obtain a $k$-nonadditive measure from an additive measure $\nu$ on $N$, we can employ the following allowable range for $\nu(A)$, where $|A|\leq k$, to maintain the monotonicity conditions as well as ensuring zero values for $k+1$ and higher order nonadditivity indices:
\begin{equation} \label{eq.allow-range-k-nonadditive}
     [\nu(A)-\min(b_1,b_2), \nu(A)+\min(b_3,b_4)]
\end{equation}
where $b_1=\nu(A)-\max_{i \in A} \nu (A\backslash \{i\}),$

$b_2= \min_{i \in A \subset B, B|>k} (2^{|B|-1}-1)(\mu(B)-\mu(B\backslash \{i\}))$,

$b_3= \min_{i \in N\backslash A} \nu (A \cup \{i\})-\nu(A)$,	

$b_4=\min_{i \in N\backslash B, A \subset B} \frac {(2^{|B|-1}-1) (2^{|B|}-1)} {2^{|B|-1} }(\mu(B \cup \{i\})-\mu(B))$.


Hence, the adjustment strategy to obtain a $k$-nonadditive measure $\mu$ from an additive measure $\nu$ can be given as follows:

\textbf{(S6):} choose a subset $A$, $0<|A|\leq k$, assign $\mu(A)$ a value within $[\nu(A)-\min(b_1,b_2), \nu(A)+\min(b_3,b_4)]$ that is not equal to $\nu(A)$ if possible, then to ensure the zero values for $k+1$ and higher order nonadditivity indices, then we set
\begin{equation}
    \mu(B)=\begin{cases} \label{eq.adjust.k.nonadditive}
        \nu(B)     &\text{if } B\neq A, |B|\leq k, \\&\text{or } A \nsubseteq B, |B|> k;\\
        \nu(B)+\frac{1}{2^{|B|-1}-1}(\mu(A)-\nu(A))  &\text{if }  A\subset B, |B|>k.\\
    \end{cases}
\end{equation}
Finally, normalize measure values by setting $\mu(B)=\mu(B)/\mu(N)$ for all $B \subseteq N$.

The random walk for the lower $k$-order subsets of $k$-nonadditive measure can be given in Algorithm \ref{alg-random-walk-k-nonadditive}:
\begin{algorithm} [!htb] 
\caption{Random walk for $k$-nonadditive measure}
\label{alg-random-walk-k-nonadditive}
   \If {$|A| \geq |\arg (\textbf{Pos}(A)+1)|$ or $A \nsubseteq \arg (\textbf{Pos}(A)+1)$ }
    { \If{$\mu(\arg (\textbf{Pos}(A)+1)) \leq \mu(A)+\min(b_3,b_4)$ }
    {
    Set $\mu(A)$ as a value between $\mu(\arg (\textbf{Pos}(A)+1))$ and $\min(\mu(A)+\min(b_3,b_4), \mu(\arg (\textbf{Pos}(A)+2)))$,\\
    Adjust the measure vales according to strategy \textbf{(S5)}.
    } }
\end{algorithm} 

Similarly, for a $k$-nonadditive measure, if the goal is to perform a random walk by decreasing the value of $\mu(A)$, a feasibility check should be conducted to ensure that $\mu(\arg (\textbf{Pos}(A)-1)) \geq \mu(A) - \min(b_1, b_2)$. If this condition is satisfied, $\mu(A)$ can be set to a value between $\mu(\arg (\textbf{Pos}(A)-1))$ and $\min(\nu(A) - \min(b_1, b_2), \mu(\arg (\textbf{Pos}(A)-2)))$, while still needing to adjust the measure values according to strategy \textbf{(S6)}.


\subsubsection{k-nonmodular measure}

Adjusting an additive measure $\nu$ on $N$ into a $k$-nonmodular measure, we can employ the following allowable range for $\nu(A)$, $|A|\leq k$, to maintain the monotonicity conditions as well as ensuring zero values for $k+1$ and higher order nonmodularity indices:
\begin{equation} \label{eq.allow-range-k-nonmodular}
     [\nu(A)-\min(c_1,c_2), \nu(A)+\min(c_3,c_4)]
\end{equation}
where $c_1=\nu(A)-\max_{i \in A} \nu (A\backslash \{i\}),$
\begin{equation*}
    c_2=\begin{cases}
        1                & \textbf{ if } |A|>1;\\
        \min_{i\in B, |B|=k+1, A \subset B} |B|(\nu(B)- \nu(B\backslash \{i\}))                & \textbf{ if } |A|=1.\\
    \end{cases}
\end{equation*}

$c_3= \min_{i \in N\backslash A} \nu (A \cup \{i\})-\nu(A)$,	
\begin{equation*}
   c_4= \begin{cases}
        \min_{i \in N\backslash B, A \subset B} (|B|)(|B|+1)(\mu(B \cup \{i\}) -\mu(B)) & \textbf{ if } |A|=1;\\
         \min_{i \in N\backslash A, B=A \cup \{i\}} \frac {|B|} {|B|-1} (\mu(B)-\mu(A)) & \textbf{ if } |A|=k.\\
    \end{cases}
\end{equation*}

Hence, the adjustment strategy to obtain a $k$-nonmodular measure $\mu$ from an additive measure $\nu$ can be given as follows:

\textbf{(S7):} choose a subset $A$, $0<|A|\leq k$, assign $\mu(A)$ a value within $[\nu(A)-\min(c_1,c_2), \nu(A)+\min(c_3,c_4)]$ that is not equal to $\nu(A)$ if possible, then to ensure the zero values for $k+1$ and higher order nonadditivity indices, then we set
\begin{equation}
    \mu(B)=\begin{cases} \label{eq.adjust.k.nonmodular}
        \nu(B)     &\text{if } B\neq A, |B|\leq k, \\&\text{or } A \nsubseteq B, |B|> k;\\
        \nu(B)+\frac{1}{|B|}(\mu(A)-\nu(A))  &\text{if }  |A|=1, A\subset B, |B|>k.\\
        \nu(B)+\frac{1}{|B|}(\mu(A)-\nu(A))  &\text{if }  |A|=k, A\subset B, |B|=k+1.\\      
    \end{cases}
\end{equation}
Finally, normalize measure values by setting $\mu(B)=\mu(B)/\mu(N)$ for all $B \subseteq N$.

The random walk for the lower $k$-order subsets of $k$-nonmodular measure can be given in Algorithm \ref{alg-random-walk-k-nonmodular}:
\begin{algorithm} [!htb] 
\caption{Random walk for $k$-nonmodular measure}
\label{alg-random-walk-k-nonmodular}
   \If {$|A| \geq |\arg (\textbf{Pos}(A)+1)|$ or $A \nsubseteq \arg (\textbf{Pos}(A)+1)$ }
    { \If{$\mu(\arg (\textbf{Pos}(A)+1)) \leq \mu(A)+\min(c_3,c_4)$ }
    {
    Set $\mu(A)$ as a value between $\mu(\arg (\textbf{Pos}(A)+1))$ and $\min(\mu(A)+\min(c_3,c_4), \mu(\arg (\textbf{Pos}(A)+2)))$,\\
    Adjust the measure vales according to strategy \textbf{(S5)}.
    } }
\end{algorithm} 

Similarly, when aiming to decrease $\mu(A)$ through a random walk for a $k$-nonmodular measure, it's essential to first check if $\mu(\arg (\textbf{Pos}(A)-1)) \geq \mu(A) - \min(c_1, c_2)$. If this holds true, you can adjust $\mu(A)$ to a value between $\mu(\arg (\textbf{Pos}(A)-1))$ and $\min(\nu(A) - \min(c_1, c_2), \mu(\arg (\textbf{Pos}(A)-2)))$, while further execute the additional adjustments by using strategy \textbf{(S7)}.

\section{Conclusions}
In this study, we have developed some approaches for generating a variety of fuzzy measures, which use additive measures as initial solutions and employ techniques such as allowable range adjustments, random walks, and adjustment strategies to acquire a comprehensive range of measures. These encompass normal fuzzy measures, supermodular measures, antibuoyant measures, superadditive measures, $k$-tolerant measures, $k$-interactive measures, $k$-maxitive measures, $k$-additive measures, $k$-nonadditive measures, and $k$-nonmodular measures.

By extending this framework with the concept of dual measures, we can derive additional measures, including submodular, antibuoyant, subadditive, $k$-intolerant, $k$-minitive, upper $k$-additive, upper $k$-nonadditive, and upper-nonmodular measures. Importantly, the adaptability of this methodology enables the integration of super/submodular or super/subadditive measures with $k$-order measures, facilitated by refined adjustment strategies and novel random walk methods.

Furthermore, we find potential for direct measure value swaps among neighboring subsets within the random walk algorithm, warranting further exploration. In future investigations, we intend to delve into the application of allowable range adjustments, random walks, and adjustment strategies to address more intricate constraints. These comprehensive approaches not only enhances our understanding of the interplay between additive and fuzzy measures but also provides novel insights and pragmatic applications across diverse domains.

\section*{Acknowledgements}
	The work was supported by the Australian Research Council Discovery project DP210100227.

\bibliographystyle{abbrv}
\bibliography{sample}

\begin{thebibliography}{10}

\bibitem{beliakov2021choquet}
G.~Beliakov and S.~James.
\newblock Choquet integral optimisation with constraints and the buoyancy
  property for fuzzy measures.
\newblock {\em Information Sciences}, 578:22--36, 2021.

\bibitem{beliakov2022choquet}
G.~Beliakov and S.~James.
\newblock Choquet integral-based measures of economic welfare and species
  diversity.
\newblock {\em International Journal of Intelligent Systems}, 37(4):2849--2867,
  2022.

\bibitem{beliakov2011learning}
G.~Beliakov, S.~James, and G.~Li.
\newblock Learning {C}hoquet-integral-based metrics for semisupervised
  clustering.
\newblock {\em IEEE Transactions on Fuzzy Systems}, 19(3):562--574, 2011.

\bibitem{beliakov2019discrete_book}
G.~Beliakov, S.~James, and J.-Z. Wu.
\newblock {\em Discrete Fuzzy Measures: Computational Aspects}.
\newblock Springer, Cham, Switzerland, 2019.

\bibitem{Beliakov2007_book}
G.~Beliakov, A.~Pradera, and T.~Calvo.
\newblock {\em Aggregation Functions: A Guide for Practitioners}.
\newblock Springer, Berlin, Heidelberg, 2007.

\bibitem{beliakovWuLearning}
G.~Beliakov and J.-Z. Wu.
\newblock Learning fuzzy measures from data: simplifications and optimisation
  strategies.
\newblock {\em Information Sciences}, 494:100--113, 2019.

\bibitem{beliakov2020kmaxtive}
G.~Beliakov and J.-Z. Wu.
\newblock Learning k-maxitive fuzzy measures from data by mixed integer
  programming.
\newblock {\em Fuzzy Sets and Systems}, 2020.

\bibitem{beliakov-wu-towards}
G.~Beliakov, J.-Z. Wu, and D.~Divakov.
\newblock Towards sophisticated decision models: Nonadditive robust ordinal
  regression for preference modeling.
\newblock {\em Knowledge-Based Systems}, 190:105351, 2020.

\bibitem{choquet1954}
G.~Choquet.
\newblock Theory of capacities.
\newblock {\em Annales de l'institut Fourier}, 5:131--295, 1954.

\bibitem{combarro201950}
E.~F. Combarro, J.~H. de~Saracho, and I.~Díaz.
\newblock Minimals plus: An improved algorithm for the random generation of
  linear extensions of partially ordered sets.
\newblock {\em Information Sciences}, 501:50 -- 67, 2019.

\bibitem{combarro2013On}
E.~F. Combarro, I.~Díaz, and P.~Miranda.
\newblock On random generation of fuzzy measures.
\newblock {\em Fuzzy Sets \& Systems}, 228(4):64--77, 2013.

\bibitem{denneberg1994non}
D.~Denneberg.
\newblock {\em Non-additive Measure and Integral}.
\newblock Springer Science \& Business Media, Dordrecht, 1994.

\bibitem{grabisch1995fuzzy}
M.~Grabisch.
\newblock Fuzzy integral in multicriteria decision making.
\newblock {\em Fuzzy Sets and Systems}, 69(3):279--298, 1995.

\bibitem{grabisch1997k}
M.~Grabisch.
\newblock k-order additive discrete fuzzy measures and their representation.
\newblock {\em Fuzzy Sets and Systems}, 92(2):167--189, 1997.

\bibitem{grabisch2016:setfunctionbook}
M.~Grabisch.
\newblock {\em Set Functions, Games and Capacities in Decision Making}.
\newblock Springer, Berlin, New York, 2016.

\bibitem{grabisch2008review}
M.~Grabisch, I.~Kojadinovic, and P.~Meyer.
\newblock A review of methods for capacity identification in {C}hoquet integral
  based multi-attribute utility theory: Applications of the {K}appalab {R}
  package.
\newblock {\em European Journal of Operations Research}, 186(2):766--785, 2008.

\bibitem{karzanov1991conductance}
A.~Karzanov and L.~Khachiyan.
\newblock On the conductance of order markov chains.
\newblock {\em Order}, 8:7--15, 1991.

\bibitem{marichal2004tolerant}
J.-L. Marichal.
\newblock Tolerant or intolerant character of interacting criteria in
  aggregation by the {C}hoquet integral.
\newblock {\em European Journal of Operational Research}, 155(3):771--791,
  2004.

\bibitem{marichal2007k}
J.-L. Marichal.
\newblock k-intolerant capacities and {C}hoquet integrals.
\newblock {\em European Journal of Operational Research}, 177(3):1453--1468,
  2007.

\bibitem{mesiar1999generalizations}
R.~Mesiar.
\newblock Generalizations of k-order additive discrete fuzzy measures.
\newblock {\em Fuzzy Sets and Systems}, 102(3):423--428, 1999.

\bibitem{mesiar2018k}
R.~Mesiar and A.~Koles{\'a}rov{\'a}.
\newblock k-maxitive aggregation functions.
\newblock {\em Fuzzy Sets and Systems}, 346:127--137, 2018.

\bibitem{miranda2002p}
P.~Miranda, M.~Grabisch, and P.~Gil.
\newblock p-symmetric fuzzy measures.
\newblock {\em International Journal of Uncertainty, Fuzziness and
  Knowledge-Based Systems}, 10(supp01):105--123, 2002.

\bibitem{shapley1953value}
L.~S. Shapley.
\newblock A value for n-person games.
\newblock {\em Contributions to the Theory of Games}, 2(28):307--317, 1953.

\bibitem{shapley1971cores}
L.~S. Shapley.
\newblock Cores of convex games.
\newblock {\em International journal of game theory}, 1:11--26, 1971.

\bibitem{sugeno1974theory}
M.~Sugeno.
\newblock {\em Theory of Fuzzy Integrals and Its Applications}.
\newblock PhD thesis, Tokyo Institute of Technology, 1974.

\bibitem{wuBeliakovNonadd}
J.-Z. Wu and G.~Beliakov.
\newblock Nonadditivity index and capacity identification method in the context
  of multicriteria decision making.
\newblock {\em Information Sciences}, 467:398--406, 2018.

\bibitem{wuBeliakovkminitive}
J.-Z. Wu and G.~Beliakov.
\newblock k-minitive capacities and k-minitive aggregation functions.
\newblock {\em Journal of Intelligent and Fuzzy Systems}, 37(2):2797–2808,
  2019.

\bibitem{wuBeliakovNonmodu}
J.-Z. Wu and G.~Beliakov.
\newblock Nonmodularity index for capacity identifying with multiple criteria
  preference information.
\newblock {\em Information Sciences}, 492:164--180, 2019.

\bibitem{wuBeliakov-k-representative}
J.-Z. Wu and G.~Beliakov.
\newblock k-order representative capacity.
\newblock {\em Journal of Intelligent and Fuzzy Systems}, 38(3):3105--3115,
  2020.

\end{thebibliography}

\end{document}